\newtheorem{thm}{Theorem}[section]
\newtheorem{lem}[thm]{Lemma}
\newtheorem{prop}[thm]{Proposition}
\newtheorem{cor}[thm]{Corollary}
\theoremstyle{definition}
\theoremstyle{remark}
\newtheorem{rem}{Remark}
\theoremstyle{definition}
\title{The Abundancy Index and Feebly Amicable Numbers}
\author{
  Jamie Bishop, Abigail Bozarth, Rebekah Kuss, Benjamin Peet Ph.D.\\
  Department of Mathematics\\
  St. Martin's University\\
  Lacey, WA 98503 \\
  \texttt{bpeet@stmartin.edu} \\
}
\begin{document}
\maketitle

\begin{abstract}
This research explores the sum of divisors - $\sigma(n)$ - and the abundancy index given by the function $\frac{\sigma(n)}{n}$. We give a generalization of amicable pairs - feebly amicable pairs (also known as harmonious pairs), that is $m,n$ such that $\frac{n}{\sigma(n)}+ \frac{m}{\sigma(m)}=1$. We first give some groundwork in introductory number theory, then the goal of the paper is to determine if all numbers are feebly amicable with at least one other number by using known results about the abundancy index. We establish that not all numbers are feebly amicable with at least one other number. We generate data using the R programming language and give some questions and conjectures.
\end{abstract}

% keywords can be removed
\keywords{Abundancy index, amicable numbers, feebly amicable numbers, harmonious numbers, sum of divisors}
\textbf{2010 MSC Classification:} 11A99

\section{Introduction}

The sum of divisor function, $\sigma(n)$, for a positive integer $n$, is the sum of all the positive divisors including $n$ itself. Looking at the ratio of the sum of divisor function and the number itself, $\frac{\sigma(n)}{n}$, or the abundancy index, we look into its relation to concepts such as perfect numbers, abundant numbers, deficient numbers, and amicable numbers. Through understanding these relations, we will define the concept of feebly amicable numbers also known as harmonious numbers in \cite{kozek2015harmonious}. This is a generalization of an amicable number with weakened conditions so that the sum of divisors of the two do not need to be equal.

Formally, these are two numbers $m$ and $n$ such that $\frac{n}{\sigma (n)}+\frac{m}{\sigma (m)}=1$. Examples of the first twenty feebly amicable pairs are given for illustration of this concept.

We use the R programming language to produce some abundancy indices and then a list of feebly amicable numbers. This data allows us to ask some questions about feebly amicable numbers that are unknown about amicable numbers.

Our main new results are Theorem 8.1 and Corollary 8.2 which give conditions for when a number can be feebly amicable with another and consequently amicable. The final section has some questions and conjectures that might be of interest to the reader or for future work.

\section{History}

The implications of this research are derived from the historical mathematical workings of famous figures, most notably Euclid, Euler, and Mersenne. Euclid further advanced our understanding of prime numbers by providing the Euclidean algorithm and showing that there are infinitely many prime numbers. Euclid also completed one of the only proofs involving perfect numbers: if $2^{n}-1$ is prime, then $2^{n-1}(2^{n}-1)$ is perfect. Euler continued the idea of perfect numbers by proving that every even perfect number can be expressed in Euclid's form. This research also makes use of Marin Mersenne's work on primes, and a Mersenne prime is of the form $2^{n}-1$. \cite{stillwell1989mathematics}

\section{Preliminary Definitions}

We give some some preliminary definitions that we will rely upon throughout. We are working here with the positive integers (natural numbers). It should also be noted that it is possible to extend all this theory to the negative integers.

Firstly, a \textit{divisor} is a number that divides into another number without a remainder.Then, a \textit{prime} is a number that has only the divisors $1$ and itself. Particular prime numbers used in this paper are Mersenne primes which are prime numbers of the form $2^{p-1} (2^{p}-1)$, where $p$ is also prime. 

Given a natural number $n$, we can define the \textit{canonical representation} of $n$ to be $\prod_{i=1}^{r} p_{i}^{a_{i}}$ where the $p_{i}$ are the distinct prime divisors of $n$ and $a_{i}$ their multiplicities. 

Two number are \textit{coprime} (alternatively \textit{relatively prime}) if they share only $1$ as a divisor.

Then and importantly to this paper, the \textit{sum of divisor function} $\sigma(n)$ for a positive integer $n$ is defined as the sum of all its divisors (including $n$ itself).

From this, we can categorize natural numbers according to:

\begin{itemize}
\item $n$ is called \textit{perfect} if $\sigma(n)=2n$.
\item $n$ is called \textit{abundant} if $\sigma(n)>2n$.
\item  $n$ is called \textit{deficient} if $\sigma(n)<2n$.
\end{itemize} 

This paper generalizes the definition of amicable numbers. To be precise, \textit{amicable numbers} are two numbers related in such a way that the sum of the proper divisors of each are equal and also equal to the sum of the numbers.  That is, $m$,$n$ are amicable if $m+n=\sigma(m)=\sigma(n)$.

Part of this paper is an investigation of the \textit{abundancy index} of a number. It is defined by $\lambda(n)=\frac{\sigma(n)}{n}$ and, in some sense, measures how divisible a number is.

\textit{Multiply-perfect numbers} are numbers such that their abundancy index is an integer. Note that perfect numbers by definition have abundancy index $2$.

Finally, two numbers in $\mathbb{N}$ are \textit{friendly} if they have the same abundancy index. That is, $\frac{\sigma(n)}{n}=\frac{\sigma(m)}{m}$. More generally, friendly numbers form \textit{friendly clubs} if they all have the same abundancy index.

\section{Preliminary Results}

We now present some preliminary results that give some illustration of the theory and that will be used throughout the rest of the paper. Good references for these and more foundational theory are \cite{long1987elementary} and \cite{dudley2012elementary}. However, there are many good texts on introductory number theory.

\begin{prop} If $p$,$q$ are distinct primes then $\sigma(p^{n}q^{m}) = \sigma(p^{n})\sigma(q^{m})$. That is, the multiplicative property can be applied where $\sigma(mn)=\sigma(m)\sigma(n)$ when $gcd(mn)=1$.
\end{prop}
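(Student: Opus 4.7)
The plan is to prove this by explicitly enumerating the divisors of $p^n q^m$ and then factoring the resulting double sum. The multiplicative identity $\sigma(mn) = \sigma(m)\sigma(n)$ for coprime $m, n$ will follow as an immediate consequence of the special case with prime powers.

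First I would characterize the divisors of $p^n q^m$. By the Fundamental Theorem of Arithmetic, any positive divisor $d$ of $p^n q^m$ has a unique factorization into primes, and each of those primes must divide $p^n q^m$, so must equal $p$ or $q$. Thus $d = p^i q^j$ for some non-negative integers $i, j$, and the constraint $d \mid p^n q^m$ forces $0 \le i \le n$ and $0 \le j \le m$. Conversely, every such $p^i q^j$ is visibly a divisor, and distinct pairs $(i,j)$ give distinct divisors by unique factorization.

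Next I would compute the sum directly:
\[
\sigma(p^n q^m) = \sum_{i=0}^{n} \sum_{j=0}^{m} p^i q^j = \left(\sum_{i=0}^{n} p^i\right)\left(\sum_{j=0}^{m} q^j\right) = \sigma(p^n)\,\sigma(q^m),
\]
where the factoring step uses distributivity and the last equality uses that the divisors of $p^n$ are exactly $1, p, p^2, \ldots, p^n$ (and similarly for $q^m$). For the general coprime multiplicative statement $\sigma(mn) = \sigma(m)\sigma(n)$ when $\gcd(m,n)=1$, the same argument goes through: if $m = \prod p_i^{a_i}$ and $n = \prod q_j^{b_j}$ with the $p_i$ and $q_j$ disjoint sets of primes, then every divisor of $mn$ factors uniquely as a divisor of $m$ times a divisor of $n$, and one again factors the sum.

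The main obstacle, such as it is, lies in justifying the bijection between divisors of $p^n q^m$ and pairs $(i,j)$ with $0 \le i \le n$, $0 \le j \le m$; this rests squarely on unique factorization, which I would simply cite from the foundational references \cite{long1987elementary} and \cite{dudley2012elementary}. Everything else is bookkeeping with finite geometric sums.
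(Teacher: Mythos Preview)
Your proposal is correct and follows essentially the same approach as the paper: enumerate the divisors of $p^n q^m$ as $p^i q^j$, write $\sigma(p^n q^m)$ as the double sum $\sum_{i=0}^{n}\sum_{j=0}^{m} p^i q^j$, and factor it into $\sigma(p^n)\sigma(q^m)$. You supply more justification for the divisor bijection via unique factorization than the paper does, but the underlying argument is identical.
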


\begin{proof}
First let $p$ and $q$ be primes, then:
\begin{center}
$\sigma(p^{n}q^{m})=\sum\limits_{i=0}^{n}\sum\limits_{j=0}^{m}p^{i}q^{j}=(1+p+\ldots+p^{n})(1+q+\ldots+q^{m})=\sigma(p^n)\sigma(q^m)$
\end{center}
\end{proof}

The above Proposition 4.1 yields the multiplicative property of the sum of divisors function. That is, if $m$ and $n$ are coprime, then $\sigma(mn)=\sigma(m)\sigma(n)$.

This leads to the following which is Theorem 2.24 of \cite{long1987elementary}:

\begin{thm}If $a=\prod_{i=1}^{r} p_i^{a_{i}}$ where $a_{i}>0$ for each $i$ is the canonical representation of $a$, then $$\sigma(a)=\prod_{i=1}^{r} \frac{p_{i}^{a_{i}} -1}{p_{i} -1}$$.
\end{thm}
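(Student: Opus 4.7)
The plan is to combine two ingredients already in hand: the multiplicativity of $\sigma$ from Proposition 4.1, and the geometric series formula applied to each prime power in the canonical representation of $a$. The argument is essentially an assembly, so I expect the main care to be in the induction extending Proposition 4.1 from two coprime factors to $r$ of them.

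First I would induct on $r$ to obtain $\sigma(a) = \prod_{i=1}^{r} \sigma(p_i^{a_i})$. The base case $r=1$ is trivial. For the inductive step, observe that $p_r^{a_r}$ is coprime to $\prod_{i=1}^{r-1} p_i^{a_i}$ (since the $p_i$ are distinct primes), so Proposition 4.1 gives
$$\sigma\!\left(\prod_{i=1}^{r} p_i^{a_i}\right) = \sigma\!\left(\prod_{i=1}^{r-1} p_i^{a_i}\right)\sigma(p_r^{a_r}),$$
and applying the inductive hypothesis to the first factor yields the claim.

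Second, I would evaluate $\sigma(p_i^{a_i})$ directly. The divisors of $p_i^{a_i}$ are exactly $1, p_i, p_i^2, \ldots, p_i^{a_i}$, so summing the finite geometric series (which converges in the usual closed form since $p_i \geq 2$) gives
$$\sigma(p_i^{a_i}) = 1 + p_i + p_i^2 + \cdots + p_i^{a_i} = \frac{p_i^{a_i+1} - 1}{p_i - 1}.$$
Substituting into the product from the first step produces the stated formula. (I note in passing that the exponent in the numerator as displayed in the theorem statement should read $a_i+1$ rather than $a_i$, matching the geometric series formula; otherwise even the case $r=1$, $a_1=1$ would fail, since $\sigma(p) = 1+p$ rather than $\frac{p-1}{p-1}=1$.)

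The principal obstacle is really only notational: extending Proposition 4.1 to $r$ factors requires being careful that at each stage the two pieces being multiplied are coprime, which follows automatically from the distinctness of the primes $p_i$. Beyond that, no new ideas are needed.
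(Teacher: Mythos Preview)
Your proof is correct and follows essentially the same route as the paper: establish $\sigma(p^{a})=\frac{p^{a+1}-1}{p-1}$ via the geometric series and then extend to the full canonical factorization by applying Proposition~4.1 inductively. Your parenthetical observation that the exponent in the displayed formula should be $a_i+1$ rather than $a_i$ is also correct and matches what the paper's own proof actually derives.
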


\begin{proof}
We first establish that:
$$\sigma(p^n)=1+p+p^2+p^3+...+p^n=\frac{p^{n+1}-1}{p-1}$$

This follows as:

$$(p-1)(1+p+p^2+p^3+...+p^n) =p^{n+1}-1$$

Then by applying Proposition 4.1 inductively, the result follows.
\end{proof}

We make here a number of remarks regarding values of the abundancy index and how it relates to perfect, abundant, deficient, and friendly numbers.

\begin{rem} The codomain of the abundancy index, $\lambda(n)=\frac{\sigma(n)}{n}$, is $\mathbb{Q} \cap (1,\infty)$.
\end{rem}
 We will see later that this codomain is not in fact the range, that is, there are values in $\mathbb{Q} \cap (1,\infty)$ that are not abundnacy indices.

\begin{rem}When $\frac{\sigma(n)}{n}=2$, then $n$ is perfect. When $1 < \frac{\sigma(n)}{n} < 2$, then $n$ is deficient.  And when $\frac{\sigma(n)}{n}>2$, then $n$ is abundant.  Additionally, all perfect numbers in this case are friendly to one another.  That is $\frac{\sigma(n)}{n}=\frac{\sigma(m)}{m}$; $\lambda(n) = \lambda(m)$. 
\end{rem}

We give a proof of the Euclid-Euler Theorem to illustrate the theory:

\begin{thm}
An even number is perfect if and only if it has the form $2^{p-1}(2^{p}-1)$, where $2^{p}-1$ is prime.
\end{thm}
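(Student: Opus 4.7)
The statement is an ``if and only if,'' so I would split into the two directions.

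For the easy direction, assume $n = 2^{p-1}(2^p - 1)$ with $2^p - 1$ prime. Since $2^p - 1$ is odd, $\gcd(2^{p-1}, 2^p - 1) = 1$, so by the multiplicativity established in Proposition 4.1, $\sigma(n) = \sigma(2^{p-1}) \sigma(2^p - 1)$. Using the formula from Theorem 4.2 for $\sigma(2^{p-1}) = 2^p - 1$, and noting that $\sigma(q) = q + 1$ for any prime $q$, this gives $\sigma(n) = (2^p - 1)(2^p) = 2 \cdot 2^{p-1}(2^p - 1) = 2n$, so $n$ is perfect. This is largely a computation, and I would just present it tightly.

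The harder direction is the converse. Suppose $n$ is an even perfect number. The plan is to isolate the 2-part. Write $n = 2^a m$ with $a \geq 1$ and $m$ odd. By multiplicativity, $\sigma(n) = (2^{a+1} - 1)\sigma(m)$, and perfection gives $(2^{a+1} - 1)\sigma(m) = 2^{a+1} m$. Since $\gcd(2^{a+1}, 2^{a+1} - 1) = 1$, I conclude $2^{a+1} - 1 \mid m$, so I can write $m = (2^{a+1} - 1)c$ for some positive integer $c$. Substituting back and cancelling $(2^{a+1} - 1)$ yields $\sigma(m) = 2^{a+1} c = m + c$.

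The key observation — and the step I expect to be the main obstacle to communicate clearly — is that $\sigma(m) = m + c$ forces $m$ and $c$ to be the \emph{only} positive divisors of $m$. If $c > 1$, then $1$, $c$, $m$ are three distinct divisors (with $1 < c < m$ since $a \geq 1$), whose sum already exceeds $m + c$, a contradiction. Hence $c = 1$, which makes $\sigma(m) = m + 1$, meaning $m$ is prime. Setting $p = a + 1$ then gives $n = 2^{p-1}(2^p - 1)$ with $2^p - 1$ prime, as desired. (I would also remark in passing that $2^p - 1$ prime forces $p$ prime, since $2^{ab} - 1$ is divisible by $2^a - 1$, matching the form stated in the theorem.)
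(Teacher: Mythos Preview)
Your proof is correct and follows essentially the same route as the paper's: both directions match, and in the converse you isolate the $2$-part, use multiplicativity and perfection to get $(2^{a+1}-1)\mid m$, write $m=(2^{a+1}-1)c$, deduce $\sigma(m)=m+c$, and then argue that $c>1$ would force too many divisors---exactly the paper's argument with $z=0$ forcing $y=1$. Your presentation is slightly more explicit about why $c$ is a proper divisor of $m$ and adds the side remark that $p$ itself must be prime, but there is no substantive difference in strategy.
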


\begin{proof}
Let $2^{p}-1$ be prime. Then, by the multiplicative property, the sum of divisors of $2^{p-1}(2^{p}-1)$ is equal to  $$\sigma(2^{p-1}(2^{p}-1))=\sigma(2^{p-1})\sigma(2^{p}-1)=(2^{p}-1)2^{p}=2(2^{p-1})(2^{p}-1).$$
Hence, since the sum of the divisors of $2^{p-1}(2^{p}-1)$ is twice itself, $2^{p-1}(2^{p}-1)$ is perfect.

For the converse, let $2^{k}x$ be an even perfect number, where $x$ is odd. For $2^{k}x$ to be a perfect number, the sum of its divisors must be twice its value. So, $$2(2^{k}x)=\sigma(2^{k}x)=\sigma(2^{k})\sigma(x)=(2^{k+1}-1)\sigma(x)$$ by the multiplicative property of $\sigma$.

The factor $2^{k+1}-1$ must divide $x$. So $y=x/(2^{k+1}-1)$ is a divisor of $x$. Now,
$$2^{k+1}y=\sigma(x)=x+y+z=2^{k+1}y+z,$$ where $z$ is the sum of the other divisors.
Thus, for this equality to be true, there must be no other divisors, so $z$ must be 0. Hence, $y$ must be 1, and $x$ must be a prime of the form $2^{k+1}-1$. Therefore, an even number is perfect if and only if it has the form $2^{p-1}(2^{p}-1)$, where $2^{p}-1$ is prime.
\end{proof}

We also give the following proposition that we will utilize later on:

\begin{prop} If $p$ is prime, then $\frac{\sigma(n)}{n}=\frac{p+1}{p}$ if and only if $n=p$.
\end{prop}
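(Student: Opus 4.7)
The forward direction is essentially a one-line check: if $n=p$, then $\sigma(p) = 1 + p$, so $\sigma(p)/p = (p+1)/p$ immediately. So the real content lies in the converse, and my plan is to exploit the multiplicativity of $\sigma$ together with the coprimality of $p$ and $p+1$.

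Assume $\sigma(n)/n = (p+1)/p$, so $p\,\sigma(n) = (p+1)\,n$. The first step is to observe that since $\gcd(p, p+1) = 1$, we must have $p \mid n$. I would then write the canonical decomposition in the form $n = p^{a} m$ with $a \geq 1$ and $\gcd(p,m) = 1$, and use Proposition~4.1 together with Theorem~4.2 to write
\[
\frac{\sigma(n)}{n} \;=\; \frac{\sigma(p^{a})}{p^{a}} \cdot \frac{\sigma(m)}{m} \;=\; \frac{p^{a+1}-1}{p^{a}(p-1)} \cdot \frac{\sigma(m)}{m}.
\]

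The main technical step is to compare the prime-power factor $\frac{p^{a+1}-1}{p^{a}(p-1)}$ with $\frac{p+1}{p}$ and show that it is at least $\frac{p+1}{p}$, with equality exactly when $a = 1$. Clearing denominators reduces this to the inequality $p^{a+2} - p \geq p^{a+2} - p^{a}$, equivalently $p^{a} \geq p$, which holds for every $a \geq 1$ and is tight precisely at $a = 1$. Combined with the trivial bound $\sigma(m)/m \geq 1$ (with equality iff $m = 1$), this gives $\sigma(n)/n \geq (p+1)/p$, with equality forcing $a = 1$ and $m = 1$, i.e.\ $n = p$.

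I do not expect a serious obstacle here: everything follows from multiplicativity and an elementary inequality on prime powers. The only place to be careful is making sure I record both equality conditions (from the $p$-part and from the coprime-to-$p$ part) so that the conclusion $n = p$ is genuinely forced, rather than merely one possibility among several.
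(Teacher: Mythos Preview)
Your proof is correct and follows essentially the same approach as the paper: both extract the $p$-power part of $n$, invoke multiplicativity of $\sigma$ (Proposition~4.1 and Theorem~4.2), and reduce the question to the elementary comparison $p^{a+2}-p^a \leq p^{a+2}-1$ (equivalently $p^{a}\geq p$). The paper packages this as a contradiction after assuming $n\neq p$, whereas you frame it as a direct inequality $\sigma(n)/n \geq (p+1)/p$ with explicit equality conditions on both factors; this is slightly tidier in tracking the boundary cases, but it is not a genuinely different route.
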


\begin{proof}

We first show that if $p$ is prime and $\frac{\sigma(n)}{n}=\frac{p+1}{p}$, then $n=p$.
So we have $\frac{\sigma(n)}{n}=\frac{(p+1)k}{(p)k}$   for some $k=p^a L$ where $p$ does not divide $L$ and $n=pk$. Then suppose for contradiction that $k>1$.

Then $\sigma(n)=(p+1)k$, but also, $\sigma(n)=\sigma(pk)=\sigma(pp^a L)$. By the multiplicative property of $\sigma$ and Theorem 4.2, $\sigma(pp^a L)=\sigma(p^{a+1}) \sigma(L)=\frac{p^{a+2}-1}{p-1} \sigma(L)$. So, $(p+1)k=\frac{p^{a+2}-1}{p-1} \sigma(L)$.

We continue the calculation with $(p+1)(p^a L)=\frac{p^{a+2}-1}{p-1} \sigma(L)$.

Hence, $(p+1)(p-1)(p^a L)=(p^{a+2}-1) \sigma(L)$, which gives, $(p^2 -1)(p^a L)=(p^{a+2}-1) \sigma(L)$, and finally, $(p^{a+2} - p^a)L=(p^{a+2}-1) \sigma(L)$.

Because $(p^{a+2} - p^a)<(p^{a+2}-1)$ and $L<\sigma(L)$ we must have that $(p^{a+2} - p^a)L<(p^{a+2}-1)\sigma(L)$. So, by contradiction, $n=p$.

For the converse if $n=p$, then if $p$ is prime, as $\sigma(p)=p+1$ we must have:

\begin{center}
    $\frac{\sigma(n)}{n}=\frac{\sigma(p)}{p}=\frac{p+1}{p}$.
\end{center}
\end{proof}

\section{R code}

In order to explore the values of both the sum of divisors function and the abundancy index, we used some code in the R programming language \cite{R} to generate the first 100,000 values of the sum of divisor function and abundancy indices. The following is some code that describes the algorithm:

\begin{verbatim}
    sod <- function(x) %This defines the sum of divisors function%
        {s<-0
        for(i in 1:x){if(x%%i==0){s<-s+i}} %This loops through the values 1 through x to see 
        which are factors and adds them to the sum if they are%
        return(s)}

    sigma<-c(1:100000) %This defines a vector of length 100,000%

    for(i in 1:100000){sigma[i]<-sod(i)} %This gives a vector of the first 100,000 values of  
    the sum of divisors%

    abun<-c(1:100000) %This again defines a vector of length 100,000%

    for(i in 1:100000){abun[i]<-sigma[i]/i} %This gives a vector of the first 100,000 values 
    of the abundancy index%
\end{verbatim}

Figure 1 shows a histogram to reflect the data.

\begin{figure}[ht]
\centering
\includegraphics[height=10cm]{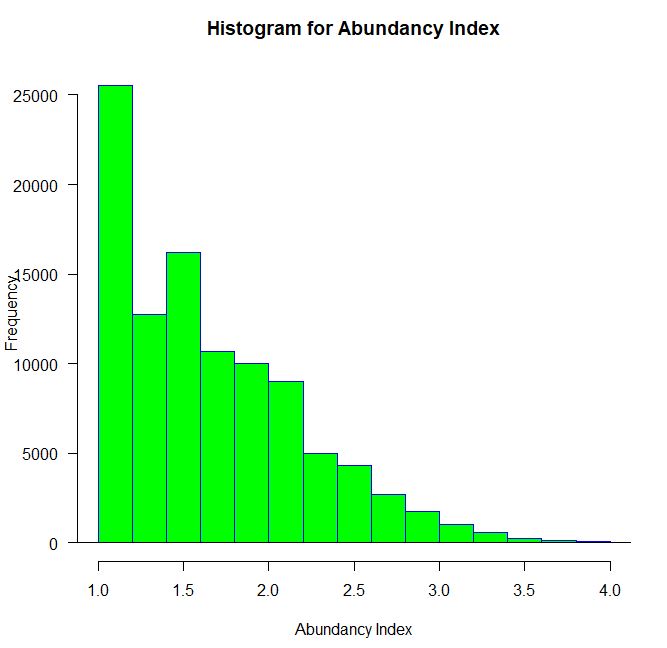}
\caption{Histogram for Abundancy Index}
\end{figure}

By our code we calculated the fraction of abundant numbers in the first $100,000$ numbers to be $0.24799$. This is not within the range given by \cite{kobayashi2010density}, but it is close. In that paper, they compute that $\alpha=\displaystyle\lim_{n\rightarrow \infty} \frac{A(n)}{n}$ is such that 

$$0.2476171<\alpha<0.2476475$$

Here $A(n)$ is the number of abundant numbers less than $n$.

We suggest that computing larger numbers of abundancy indices would give an estimate within the proven bounds. In any case, the histogram illustrates how there are roughly three times more deficient numbers than abundant numbers.

\section{Range of the Abundancy Index}

From our data, $\frac{5}{4}$ does not appear as an abundancy index of any $n$ less than $100,000$. First, we prove that $\frac{5}{4}$ does not appear for any natural number, and then, we show a generalization for finding more numbers not in the range.

\begin{lem}
$\frac{5}{4} \neq \frac{\sigma(n)}{n}$ for any natural number $n$.
\end{lem}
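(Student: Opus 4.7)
The plan is to suppose for contradiction that some $n$ satisfies $\sigma(n)/n = 5/4$, i.e.\ $4\sigma(n) = 5n$, and derive a divisibility constraint that forces $n$ to have a specific $2$-adic form. Since $\gcd(4,5)=1$, the equation $4\mid 5n$ immediately gives $4\mid n$, so I can write $n = 2^a m$ with $a\ge 2$ and $m$ odd.

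Next I would apply the multiplicativity of $\sigma$ (Proposition~4.1) together with the formula $\sigma(2^a) = 2^{a+1}-1$ from Theorem~4.2 to rewrite $\sigma(n) = (2^{a+1}-1)\sigma(m)$. Substituting into $4\sigma(n) = 5n$ and dividing by $4$ yields
\begin{equation*}
(2^{a+1}-1)\,\sigma(m) \;=\; 5\cdot 2^{a-2}\, m.
\end{equation*}
This rearranges to
\begin{equation*}
\frac{\sigma(m)}{m} \;=\; \frac{5\cdot 2^{a-2}}{2^{a+1}-1}.
\end{equation*}

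The key step is to show that the right-hand side is always strictly less than $1$ for $a \ge 2$, which contradicts the basic fact that $\sigma(m)/m \ge 1$ for every positive integer $m$. This reduces to the inequality $2^{a+1}-1 > 5\cdot 2^{a-2}$, i.e.\ $8\cdot 2^{a-2} - 1 > 5\cdot 2^{a-2}$, or equivalently $3\cdot 2^{a-2} > 1$, which is clearly true for all $a \ge 2$. The contradiction completes the proof.

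There is no real obstacle here; the only mildly delicate point is making sure the case $a=2$ is not excluded accidentally (so that the exponent $2^{a-2}$ is a nonnegative integer and the equation above is still over $\mathbb{Z}$), but since $a=2$ gives $7\sigma(m) = 5m$ directly, the argument goes through unchanged. The heart of the argument is simply that the $2$-part of $n$ already contributes an abundancy factor strictly exceeding $5/4$, leaving no room for the odd part $m$ to bring the ratio back down without violating $\sigma(m)\ge m$.
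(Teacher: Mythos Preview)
Your proof is correct and follows essentially the same route as the paper: both deduce $4\mid n$, write $n=2^{a}m$ with $m$ odd and $a\ge 2$, apply multiplicativity of $\sigma$, and obtain a contradiction from the fact that the $2$-part alone already forces the abundancy above $5/4$. The only cosmetic difference is that the paper phrases the contradiction as $\sigma(n)/n\ge 7/4$, whereas you equivalently rearrange it to $\sigma(m)/m<1$.
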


\begin{proof}
Suppose to the contrary. So $5k=\sigma(n)$ and $4k=n$ for some $k$.  Thus, $n=4k=2^{a+2}l$ for some nonnegative integer $a\in\mathbb{N}$ and odd integer $l\in\mathbb{N}$. So then  $\sigma(n)=\sigma(2^{a+2}l)$, which by the multiplicative property of $\sigma$, $\sigma(2^{a+2}l)=\sigma(2^{a+2})\sigma(l)=(2^{a+3}-1)\sigma(l)$. 

This leads to: $$\frac{\sigma(n)}{n}=\frac{(2^{a+3}-1)\sigma(l)}{2^{a+2}l}>\frac{7}{4}\frac{\sigma(l)}{l}>\frac{7}{4}.$$

Therefore, $\frac{5}{4} \neq \frac{\sigma(n)}{n}$.
\end{proof}

This can be generalized to Theorem 1 from \cite{weiner2000abundancy}:

\begin{thm}
If $k$ is coprime to $m$, and $m<k<\sigma(m)$, then $\frac{k}{m}$ is not the abundancy index of any integer.
\end{thm}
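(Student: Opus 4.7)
The plan is to argue by contradiction: suppose some natural number $n$ satisfies $\frac{\sigma(n)}{n} = \frac{k}{m}$. Cross-multiplying gives $m\,\sigma(n) = k n$, and since $\gcd(k,m)=1$, Euclid's lemma forces $m \mid n$. Write $n = m j$ for a positive integer $j$; the contradiction will come from analyzing the two cases $j=1$ and $j \geq 2$.

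The central ingredient I would isolate is a monotonicity principle for the abundancy index under divisibility: if $m \mid n$ then $\lambda(n) \geq \lambda(m)$, with strict inequality as soon as $n > m$. The cleanest route is the identity $\lambda(n) = \sum_{d \mid n} \frac{1}{d}$, obtained by the substitution $d \mapsto n/d$ inside $\sigma(n) = \sum_{d \mid n} d$. Because every divisor of $m$ is automatically a divisor of $n$, the divisor sum for $n$ contains every term of the divisor sum for $m$; and when $n > m$ it contains at least the additional term $\frac{1}{n}$, which forces strict inequality. This is the same underlying reason the $\frac{5}{4}$ argument in Lemma 6.1 worked (there, $4 \mid n$ forced $\lambda(n) \geq \lambda(4) = \frac{7}{4}$), but phrased in a way that removes the dependence on the specific prime factorization of $m$.

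With the monotonicity lemma in hand, the cases finish quickly. If $j=1$, so $n=m$, then $\lambda(m) = \frac{k}{m}$ gives $k = \sigma(m)$, directly contradicting $k < \sigma(m)$. If $j \geq 2$, so $n > m$, the strict monotonicity gives $\frac{k}{m} = \lambda(n) > \lambda(m) = \frac{\sigma(m)}{m}$, hence $k > \sigma(m)$, again contradicting the hypothesis. The only step that takes any real thought is the strict monotonicity, but it is essentially immediate from the rewriting above. Finally I would remark that the hypothesis $m < k$ is never actually invoked: since $\lambda(n) > 1$ for every $n$, any equality $\lambda(n) = k/m$ automatically forces $k > m$, so that inequality is included in the statement only to emphasize that $k/m$ is a plausible candidate for an abundancy index.
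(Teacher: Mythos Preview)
Your proof is correct and follows essentially the same route as the paper's: deduce $m \mid n$ from coprimality, then invoke the monotonicity $\lambda(m) \leq \lambda(n)$ (which you justify via the identity $\lambda(n)=\sum_{d\mid n}1/d$, whereas the paper simply asserts it) to contradict $k<\sigma(m)$. One small quibble with your closing remark: $\lambda(1)=1$, so the claim that $\lambda(n)>1$ for every $n$ needs $n\geq 2$; this does not affect the proof itself.
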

\begin{proof}
Assume $\frac{k}{m}=\frac{\sigma(n)}{n}$. Then $m\sigma(n)=kn$, so $m|kn$, hence $m|n$ because $(k,m)=1$. But because $m|n$ implies $\frac{\sigma(m)}{m} \leq \frac{\sigma(n)}{n}$, with equality only if $m=n$, $\frac{\sigma(m)}{m} \leq \frac{\sigma(n)}{n}=\frac{k}{m}$, contradicting the assumption $k<\sigma(m)$.
\end{proof}

We can see that not all rational numbers greater than $1$ are in the range, but we ask the question: is the range dense in rationals greater than 1? Recall that if $S \subseteq T \subseteq \mathbb{R}$, we say that $S$ is dense in $T$ if for any two numbers in $T$ there exists an element of $S$ in between them.

We refer to \cite{laatsch1986measuring} and \cite{weiner2000abundancy} again for the following results:

\begin{thm} (Theorem 5 in \cite{laatsch1986measuring}) The set$\{\lambda(n)|n\in\mathbb{N}\}$ is dense in $\mathbb{Q}\cap(1,\infty)$.
\end{thm}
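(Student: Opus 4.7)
The plan is to establish density by a constructive greedy argument using squarefree integers. Given any two rationals $q_1 < q_2$ in $(1, \infty)$, I will exhibit some $n \in \mathbb{N}$ with $\lambda(n) \in (q_1, q_2)$; this suffices since $\lambda(n) \in \mathbb{Q}$ automatically. The building blocks are products of distinct primes: by Proposition 4.1 and the formula $\lambda(p) = (p+1)/p$, for distinct primes $p_1, \ldots, p_k$ one has $\lambda(p_1 \cdots p_k) = \prod_{i=1}^{k} (1 + 1/p_i)$. The crucial analytic input is the classical fact that $\prod_p (1 + 1/p)$ diverges as $p$ ranges over primes, which follows from $\log(1 + 1/p) \geq 1/(2p)$ combined with Euler's divergence of $\sum_p 1/p$.

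The construction goes as follows. Enumerate the primes in increasing order $p_1 < p_2 < \cdots$, set $n_0 = 1$, and define recursively
$$n_i = \begin{cases} n_{i-1}\, p_i & \text{if } \lambda(n_{i-1})(1 + 1/p_i) < q_2, \\ n_{i-1} & \text{otherwise.} \end{cases}$$
By the multiplicativity of $\lambda$, the sequence $\lambda(n_i)$ is non-decreasing and strictly bounded above by $q_2$.

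To show that some $\lambda(n_i)$ lands in $(q_1, q_2)$, I would split into two cases. If only finitely many primes are ever skipped, then from some index $N$ onward every $p_i$ is multiplied in, giving $\lambda(n_i) \geq \lambda(n_{N-1}) \prod_{j=N}^{i}(1 + 1/p_j)$; the right side tends to infinity, contradicting the bound $\lambda(n_i) < q_2$. Hence infinitely many primes must be skipped, and whenever $p_i$ is skipped we have $\lambda(n_{i-1}) \geq q_2 p_i /(p_i + 1)$. Since this lower bound tends to $q_2$ as $p_i \to \infty$, for all sufficiently large skipped indices (concretely $p_i > q_1/(q_2 - q_1)$) it already exceeds $q_1$, so $\lambda(n_{i-1}) \in (q_1, q_2)$ as required. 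The only nontrivial obstacle is the divergence of $\sum_p 1/p$, which I would cite as a standard result of Euler rather than reprove; everything else is elementary bookkeeping resting on Proposition 4.1.
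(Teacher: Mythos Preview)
Your argument is correct. The greedy squarefree construction works exactly as you describe: the sequence $\lambda(n_i)$ is bounded above by $q_2$ by design, divergence of $\prod_p(1+1/p)$ forces infinitely many skips, and at any skipped prime $p_i$ the inequality $\lambda(n_{i-1}) \ge q_2\,p_i/(p_i+1)$ eventually pushes $\lambda(n_{i-1})$ above $q_1$ while it remains strictly below $q_2$. The only external input is Euler's divergence of $\sum_p 1/p$, which is entirely reasonable to cite.

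As for comparison: the paper does not actually supply a proof of this theorem. It merely records the statement and attributes it to Laatsch (1986), then moves on to reproduce Weiner's proofs of the subsequent Lemma~6.4 and Theorem~6.5, which concern the \emph{complement} of the range of $\lambda$. So there is nothing in the paper to compare your argument against. For what it is worth, your approach is close in spirit to Laatsch's original, which likewise builds abundancy ratios multiplicatively from prime factors; your version is a clean, fully constructive variant.

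One cosmetic remark: the sentence ``this suffices since $\lambda(n)\in\mathbb{Q}$ automatically'' is unnecessary. Under the paper's definition of density (an element of $S$ between any two elements of $T$), you only need to trap some $\lambda(n)$ between two given rationals in $(1,\infty)$; your argument in fact works for arbitrary real endpoints, which is slightly more than required.
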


We give the proofs from \cite{weiner2000abundancy} of the following results for exposition:

\begin{lem}
Let $m$ be a positive integer. If $p$ is prime with $p>2m$, then among any $2m$ consecutive integers, there is at least one integer coprime with $pm$.
\end{lem}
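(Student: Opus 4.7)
The plan is to count, within the block of $2m$ consecutive integers, how many are coprime to $m$ and then subtract those that are divisible by $p$; since $p > 2m \geq m$ and $p$ is prime, $p \nmid m$, so $\gcd(p,m)=1$, and consequently $\gcd(n,pm)=1$ if and only if $\gcd(n,m)=1$ and $p \nmid n$. This reduces the problem to two independent counts.

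First I would use the standard fact that any $m$ consecutive integers form a complete residue system modulo $m$, and hence contain exactly $\phi(m)$ integers coprime to $m$. Splitting the given block of $2m$ consecutive integers into two blocks of $m$ then yields exactly $2\phi(m)$ integers in the block coprime to $m$.

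Next I would bound the count of multiples of $p$ in the block. Because the block has length $2m$ and $p > 2m$, consecutive multiples of $p$ are spaced farther apart than the block itself, so at most one element of the block is divisible by $p$. Subtracting, at least $2\phi(m) - 1$ elements of the block are coprime to both $m$ and $p$, and therefore coprime to $pm$. Since $\phi(m) \geq 1$ for every positive integer $m$, we get $2\phi(m) - 1 \geq 1$, which furnishes the required integer.

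There is no substantive obstacle here; the only things to check are the two preliminary facts $\gcd(p,m)=1$ and $\phi(m) \geq 1$, both of which are immediate from the hypotheses $p > 2m$ with $p$ prime and $m \geq 1$. The argument is essentially a counting estimate combining the distribution of units modulo $m$ with the sparseness of multiples of a large prime.
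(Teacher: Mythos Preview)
Your proof is correct and follows essentially the same approach as the paper's: both argue that among $2m$ consecutive integers there are at least two coprime to $m$ (you make this precise as exactly $2\phi(m)$ via the complete residue system, the paper simply asserts ``at least two''), and that at most one of them can be a multiple of $p$ since $p>2m$, leaving at least one coprime to $pm$. Your version is slightly more explicit about why $\gcd(p,m)=1$ is needed to reduce coprimality with $pm$ to the two separate conditions, but the substance is the same.
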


\begin{proof}
Let $S$ be any set of $2m$ consecutive integers. If $p>2m$ there is at most one multiple of $p$ in $S$. But $S$ contains at least two integers coprime with $m$, one of which is coprime with $p$ and, therefore, also $pm$.
\end{proof}

\begin{thm} (Theorem 2 in \cite{weiner2000abundancy}) The complement of $\{\lambda(n)|n\in\mathbb{N}\}$ in $\mathbb{Q}\cap(1,\infty)$ is dense in $\mathbb{Q}\cap(1,\infty)$.
\end{thm}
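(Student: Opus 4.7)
The plan is to use Theorem 6.2 as a machine for producing rationals outside the range of $\lambda$, and to use Lemma 6.4 to show that such rationals can be slotted into any prescribed open interval in $(1,\infty)$. Concretely, given $r_1 < r_2$ in $\mathbb{Q} \cap (1,\infty)$, I aim to construct positive integers $k$ and $M$ satisfying $\gcd(k, M) = 1$, $M < k < \sigma(M)$, and $r_1 < k/M < r_2$; Theorem 6.2 will then certify that $k/M$ is not an abundancy index, and density of the complement follows since $r_1, r_2$ are arbitrary.

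The construction proceeds in two steps. First, because the abundancy index is unbounded (for instance $\sigma(n)/n = \prod_{p \mid n}(1+1/p)$ grows without bound on primorials, since $\sum 1/p$ diverges), I would fix an $m$ with $\sigma(m)/m > r_2$. Second, I would choose a prime $p$ that is simultaneously larger than $2m$ (so that Lemma 6.4 is applicable and, since $p$ is a prime exceeding $m$, also $\gcd(p,m)=1$) and larger than $2/(r_2-r_1)$ (so that the open interval $(pm \cdot r_1,\, pm \cdot r_2)$ has length exceeding $2m$, hence contains at least $2m$ consecutive integers). Setting $M = pm$, Lemma 6.4 then supplies an integer $k$ in that interval with $\gcd(k, pm)=\gcd(k,M) = 1$.

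The verification is brief. From $r_1 > 1$ one gets $k > Mr_1 > M$, so $k > M$. Multiplicativity with $\gcd(p,m)=1$ gives $\sigma(M)/M = \frac{p+1}{p}\cdot \sigma(m)/m > \sigma(m)/m > r_2$, whence $k < Mr_2 < \sigma(M)$. All three hypotheses of Theorem 6.2 hold for $k/M$, and $r_1 < k/M < r_2$ by construction, completing the argument.

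The main obstacle is less mathematical than organizational: the serious content lies in Theorem 6.2 and Lemma 6.4, so the proof reduces to choosing $m$ and $p$ so that every hypothesis fires simultaneously. The one mild subtlety is calibrating $p$ large enough that the interval $(Mr_1, Mr_2)$ is wide enough for Lemma 6.4 to guarantee a coprime integer inside; the side condition $\sigma(M)/M > r_2$ then comes for free, because the factor $\frac{p+1}{p}$ only enlarges the abundancy index we already chose above $r_2$.
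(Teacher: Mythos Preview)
Your proof is correct and uses the same two workhorses as the paper (Theorem~6.2 and Lemma~6.4), but your overall strategy is genuinely different and in some respects cleaner. The paper first invokes Theorem~6.3 to choose $m$ with $\sigma(m)/m$ already close to the target $x$, then takes $k$ of the special form $\sigma(pm)-j$ for a small $j\le 2m$ furnished by Lemma~6.4, and finally checks via several inequalities on $p$ that $(\sigma(pm)-j)/(pm)$ lands in $(x-\epsilon,x+\epsilon)$. You instead pick $m$ only so that $\sigma(m)/m$ exceeds the whole target interval (using nothing more than the divergence of $\sum 1/p$), set $M=pm$, and then apply Lemma~6.4 directly to a block of $2m$ consecutive integers inside $(Mr_1,Mr_2)$ to locate $k$; the inequality $k<\sigma(M)$ is then automatic. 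The upshot is that your argument dispenses with Theorem~6.3 entirely and needs fewer calibrations on $p$, at the modest cost of appealing to the unboundedness of the abundancy index; the paper's version stays closer to $x$ throughout but leans on the prior density result.
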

\begin{proof}
Choose any real $x > 1$, and any $\epsilon>0$. We will exhibit a rational in the interval $(x-\epsilon , x+\epsilon)$, and that is not an abundancy ratio. By Theorem 6.3, choose $m>1$ so that the abundancy index $\frac{\sigma(m)}{m}$ is in the interval $(x-\frac{\epsilon}{2}, x+\frac{\epsilon}{2})$. For every prime $p>2m$, we have:

\begin{center}
    $x-\frac{\epsilon}{2}<\frac{\sigma(m)}{m}<\frac{\sigma(pm)}{pm}=(1+\frac{1}{p})\frac{\sigma(m)}{m}<(1+\frac{1}{p})(x+\frac{\epsilon}{2})$.
\end{center}

If we also require $p>\frac{2x+\epsilon}{\epsilon}$, then $(1+\frac{1}{p})(x+\frac{\epsilon}{2})<x+\epsilon$, we have: 

\begin{center}
    $x-\frac{\epsilon}{2}<\frac{\sigma(pm)}{pm}<x+\epsilon$.
\end{center}

By the Lemma 6.4, we know that $\sigma(pm)-k$ is coprime with $pm$ for some $k$ with $1 \leq k \leq 2m$. For such $k$, we also have:

\begin{center}
    $\sigma(pm)-k \geq \sigma(pm)-2m \geq (p+1)(m+1)-2m>pm$
\end{center}

because $p>2m$. Therefore, by Theorem 6.2, $\frac{\sigma(pm)-k}{pm}$ is not an abundancy index. So, then:

\begin{center}
    $\frac{\sigma(pm)-k}{pm} \geq \frac{\sigma(pm)-2m}{pm}= \frac{\sigma(pm)}{pm}-\frac{2}{p}>x-\frac{\epsilon}{2}-\frac{2}{p}$.
\end{center}

If $p\geq \frac{4}{\epsilon}$, we have $x-\frac{\epsilon}{2}-\frac{2}{p} \geq x-\epsilon$, thus $\frac{\sigma(pm)-k}{pm}>x-\epsilon$. All the inequalities are satisfied if $p>max \{ 2m,\frac{2x+\epsilon}{\epsilon}, \frac{4}{\epsilon} \}$, and so:

\begin{center}
    $x-\epsilon<\frac{\sigma(pm)-k}{pm} < \frac{\sigma(pm)}{pm}<x+\epsilon$.
\end{center}
 This gives, $\frac{\sigma(pm)-k}{pm}$ that is not an abundancy index, within $\epsilon$ of $x$.
\end{proof}

\section{Feebly Amicable Numbers}

We now proceed to generalize the definition of amicable numbers. We do so by recognizing the following result:

\begin{prop}
If two numbers $m$, $n$ are amicable, then $\frac{n}{\sigma(n)}+\frac{m}{\sigma(m)}=1$.
\end{prop}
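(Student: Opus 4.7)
The plan is to unpack the definition of amicable numbers and then perform a direct algebraic substitution. Recall from the Preliminary Definitions that $m$ and $n$ are amicable precisely when $m + n = \sigma(m) = \sigma(n)$. The key observation I want to exploit is that both denominators $\sigma(n)$ and $\sigma(m)$ are therefore equal, and in fact equal to $m+n$, which is exactly the sum of the two numerators.

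First I would write $\sigma(m) = m+n$ and $\sigma(n) = m+n$, coming straight from the definition. Then I would substitute these into the left-hand side of the target identity to obtain
\[
\frac{n}{\sigma(n)} + \frac{m}{\sigma(m)} = \frac{n}{m+n} + \frac{m}{m+n}.
\]
Combining over the common denominator $m+n$ gives $\frac{m+n}{m+n} = 1$, which is the desired conclusion.

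There is essentially no obstacle here: the proposition is a one-line algebraic consequence of the definition, and its role is really to motivate the more general notion of feebly amicable (harmonious) pairs introduced in the next section by showing that the condition $\frac{n}{\sigma(n)}+\frac{m}{\sigma(m)}=1$ is genuinely a weakening of amicability. I would therefore keep the proof to three lines and emphasize that the converse fails, so that the reader understands why the new definition is strictly more general.
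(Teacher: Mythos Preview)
Your proof is correct and follows essentially the same approach as the paper: both use the definition $\sigma(m)=\sigma(n)=m+n$ to replace the denominators and combine the fractions to $\frac{m+n}{m+n}=1$. The paper's version is slightly terser (it writes $\frac{m+n}{\sigma(n)}=1$ and then replaces one $\sigma(n)$ by $\sigma(m)$), but the argument is identical in substance.
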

\begin{proof}
Let $m$ and $n$ be amicable numbers. Then $\sigma(m)=\sigma(n)=m+n$ and so $\frac{m+n}{\sigma(n)}=1$ hence
$\frac{n}{\sigma(n)}+\frac{m}{\sigma(n)}=1$.

Thus, if $m$ and $n$ are amicable numbers, then  $\frac{m}{\sigma(m)}+\frac{n}{\sigma(n)}=1$.
\end{proof}

This allows us to formulate the following definition:

\textit{Feebly amicable numbers} are pairs $m$, $n$ such that $$\frac{n}{\sigma (n)}+ \frac{m}{\sigma(m)}=1$$

Alternatively, we can define in terms of the abundancy index $\lambda$:

$$\frac{1}{\lambda(n)}+\frac{1}{\lambda(m)}=1$$

In words, two numbers are feebly amicable if the reciprocals of their abundancy indices sum to 1. Note that feebly amicable pairs are referred to as harmonious pairs in \cite{kozek2015harmonious}.

\begin{rem} We note that pairs of perfect numbers are not amicable (no two perfect numbers have the same sum of divisors). However, they are feebly amicable, and the Venn diagram in Figure 2 illustrates the containment shown in Proposition 7.1.
\end{rem}

We also note that amicable pairs have been extended to \textit{amicable triples} which are three numbers $m$, $n$, and $s$ such that $\sigma (m) =\sigma(n)=\sigma(s)=m+n+s$, and therefore the following is true as well: $\frac{n}{\sigma(n)} + \frac{m}{\sigma(m)} + \frac{s}{\sigma(s)} =1$.

If amicable pairs and triples have been defined such as they have been above, then \textit{amicable k-tuples} are numbers $n_{1}, \ldots, n_{k}$ such that $$\sigma(n_{1})=\ldots=\sigma(n_{k})=n_{1}+\ldots+n_{k}$$

Given these definitions we can also generalize to \textit{feebly amicable triples}. These are three numbers $m$, $n$, and $s$ such that $$\frac{n}{\sigma(n)}+ \frac{m}{\sigma(m)}+\frac {s}{\sigma(s)}=1$$ and \textit{feebly amicable k-tuples} as numbers $n_{1}, \ldots, n_{k}$ such that $$\frac{n_{1}}{\sigma(n_{1})}+\ldots+\frac{n_{k}}{\sigma (n_{k})}=1.$$

\begin{figure}[ht]
\centering
\includegraphics[height=6cm]{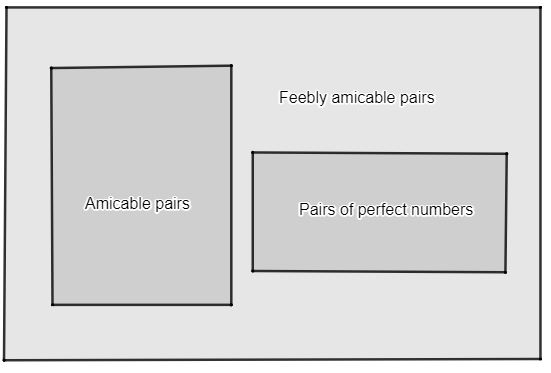}
\caption{Venn diagram}
\end{figure}

\begin{rem} All members of a friendly club have the same abundancy index. It is possible to talk of \textit{feebly amicable clubs}. As if $m,n$ are feebly amicable then $m,l$ are feebly amicable for any $l$ in the same friendly club as $n$.
\end{rem}

\section{New Results}

We now ask the following question: are all integers feebly amicable with some other integer? To see that this is not true, we establish the following:

\begin{thm}
Let $k$ and $m$ be such that $k$ is coprime with $m$ and $m<k<\sigma(m)$. If some $n$ has abundancy index $\frac{k}{k-m}$, then $n$ is not feebly amicable with any other integer.
\end{thm}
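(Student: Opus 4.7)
The plan is to invert the feebly amicable condition and reduce the statement to Theorem 6.2 (Weiner's theorem). Suppose, for contradiction, that $n$ has abundancy index $\lambda(n) = \frac{k}{k-m}$ and that $n$ is feebly amicable with some integer $n'$. Then by definition $\frac{n}{\sigma(n)} + \frac{n'}{\sigma(n')} = 1$, and since $\frac{n}{\sigma(n)} = \frac{1}{\lambda(n)} = \frac{k-m}{k}$, I would solve for the required abundancy ratio of the partner as
\[
\frac{n'}{\sigma(n')} = 1 - \frac{k-m}{k} = \frac{m}{k},
\]
so that $\lambda(n') = \frac{k}{m}$.

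Next, I would invoke the hypothesis that $\gcd(k,m) = 1$ and $m < k < \sigma(m)$. These are exactly the assumptions of Theorem 6.2, which says that under this hypothesis $\frac{k}{m}$ is not the abundancy index of any integer. Hence no such $n'$ can exist, contradicting the assumption that $n$ has a feebly amicable partner.

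The only subtle issue is whether the definition allows a number to be feebly amicable with itself; I would briefly observe that this is not a loophole, because self-feebly-amicability would force $\lambda(n) = 2$, i.e. $\frac{k}{k-m} = 2$, which forces $k = 2m$ and hence (by coprimality) $m = 1$, $k = 2$, contradicting the required inequality $m < k < \sigma(m) = 1$. So the partner $n'$ must genuinely be a different integer, and the reduction above completes the argument.

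The main obstacle is essentially bookkeeping: the statement is really a one-step corollary of Theorem 6.2 via the algebraic identity $\frac{1}{\lambda(n)} + \frac{1}{\lambda(n')} = 1 \iff \lambda(n') = \frac{\lambda(n)}{\lambda(n)-1}$, so the bulk of the work is making sure the substitution $\lambda(n) = \frac{k}{k-m}$ yields precisely the forbidden ratio $\frac{k}{m}$ from Weiner's theorem.
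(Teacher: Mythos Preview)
Your argument is correct and follows essentially the same route as the paper's proof: substitute $\lambda(n)=\frac{k}{k-m}$ into the feebly amicable relation, solve to obtain $\lambda(n')=\frac{k}{m}$, and invoke Theorem~6.2 to rule this out. Your extra remark handling the degenerate self-partner case is additional care not present in the paper but does no harm.
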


\begin{proof}
Suppose that $\lambda(n)=\frac{\sigma(n)}{n}=\frac{k}{k-m}$.  Since we already know $\frac{1}{\lambda(n)}+\frac{1}{\lambda(m)}=1$, then we can substitute in and get:
$$\frac{k-m}{k}+\frac{1}{\lambda(m)}=1$$.
$$\frac{1}{\lambda(m)}=1-\frac{k-m}{k}$$
$$\hspace{1.3cm}=\frac{k-(k-m)}{k}$$
$$\hspace{-0.2cm}=\frac{m}{k}$$.

However, there is no such abundancy index as $\lambda(m)=\frac{k}{m}$, by Theorem 6.2.  Thus, proving the theorem.
\end{proof}

To see that this is not vacuous note that we have seen that $\frac{5}{4}$ is not an abundancy index, yet the abundancy index of $14182439040$ is known to be $5=\frac{5}{5-4}$ as it is a multiply perfect number of order 5. See OEIS A007539 \cite{oeis}.

A natural corollary of this theorem is the following:

\begin{cor}
If $k$ is coprime with $m$, $m<k<\sigma(m)$, and $n$ has abundancy index $\frac{k}{k-m}$, then $n$ is not amicable with any other integer.
\end{cor}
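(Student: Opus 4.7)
The plan is to deduce this corollary immediately from Theorem 8.1 by observing that amicability is a strictly stronger condition than feeble amicability. Proposition 7.1 tells us that if $m,n$ are amicable, then $\frac{n}{\sigma(n)}+\frac{m}{\sigma(m)}=1$, i.e., they are feebly amicable. Contrapositively, if $n$ fails to be feebly amicable with any integer, then $n$ cannot be amicable with any integer either.

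Concretely, I would argue as follows. Suppose the hypotheses of the corollary hold, namely $\gcd(k,m)=1$, $m<k<\sigma(m)$, and $\lambda(n)=\frac{k}{k-m}$. By Theorem 8.1, $n$ is not feebly amicable with any other integer. Now assume for contradiction that $n$ were amicable with some integer $n'$. Then Proposition 7.1 would give $\frac{n}{\sigma(n)}+\frac{n'}{\sigma(n')}=1$, making $n$ and $n'$ a feebly amicable pair, which contradicts Theorem 8.1. Hence no such $n'$ exists.

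There is really no obstacle here, since the work is entirely done by the two previously established results; the corollary is essentially the observation that \emph{amicable implies feebly amicable} combined with Theorem 8.1. The only thing to be a little careful about is the phrase ``any other integer,'' which should be interpreted in the same way in the conclusions of Theorem 8.1 and of the corollary, so that the implication truly passes through the contrapositive of Proposition 7.1 without any slippage.
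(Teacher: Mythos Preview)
Your proof is correct and is essentially identical to the paper's own argument: the paper simply remarks that the corollary ``follows directly from the fact that amicable pairs are feebly amicable pairs,'' which is exactly the combination of Proposition 7.1 with Theorem 8.1 that you spelled out.
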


This follows directly from the fact that amicable pairs are feebly amicable pairs. Hence, in particular $14182439040$ has no amicable pair.

Proposition 4.4 showed that the only number with abundancy index $\frac{p+1}{p}$ was $p$ where $p$ was prime. The question arises, is $p$ feebly amicable with any number? Naturally, such a number would need abundancy index $p+1$. That is, $p+1$-perfect. We state this as a result:

\begin{prop}
For $p$ prime, $p$ is feebly amicable with $n$ if and only if $n$ is $(p+1)$-perfect.
\end{prop}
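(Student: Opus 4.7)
The plan is to unfold both definitions and observe that they are equivalent after a single substitution. By definition, $p$ and $n$ are feebly amicable precisely when
\[
\frac{p}{\sigma(p)} + \frac{n}{\sigma(n)} = 1.
\]
Since $p$ is prime, the only divisors of $p$ are $1$ and $p$, so $\sigma(p) = p+1$. I would substitute this in and isolate $\frac{n}{\sigma(n)}$.

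Next, the equation $\frac{p}{p+1} + \frac{n}{\sigma(n)} = 1$ rearranges to $\frac{n}{\sigma(n)} = \frac{1}{p+1}$, which in terms of the abundancy index is $\lambda(n) = p+1$. By the definition of multiply-perfect numbers given in Section 3, this is exactly the condition that $n$ be $(p+1)$-perfect. Since every step is reversible (each manipulation is either substitution of the value $\sigma(p)=p+1$ or an algebraic rearrangement with $p+1 \neq 0$), the biconditional follows at once.

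There is no real obstacle here: the only content is recognizing $\sigma(p) = p+1$ for $p$ prime and carrying out the substitution. I would therefore present the proof as a short two-line chain of equivalences, concluding that the feebly amicable condition on $(p,n)$ holds if and only if $\lambda(n) = p+1$, i.e., if and only if $n$ is $(p+1)$-perfect.
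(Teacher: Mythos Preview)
Your proof is correct and matches the paper's own reasoning. The paper does not give a formal proof of this proposition; it merely observes in the preceding sentence that a number feebly amicable with $p$ ``would need abundancy index $p+1$,'' which is exactly the substitution $\sigma(p)=p+1$ followed by the rearrangement you carry out.
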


It is unknown if there are any coprime amicable pairs \cite{Garcia}. Hence a natural question is whether there are any coprime feebly amicable pairs. The data reveals that there are none less than $1,000$ but that the first coprime feebly amicable pair is $1485$ and $868$. There are another four pairs before $5,000$.

\section{Examples}

We now generate the first 20 feebly amicable pairs that are not amicable or pairs of perfect numbers. To do so, we implemented the following code:

\begin{verbatim}
    for(i in 1:100000){for(j in 1:i){if(1/abun[i]+1/abun[j]==1){print(i) print(j)}}}
\end{verbatim}

These pairs are consistent with those listed in \cite{oeis} in A253534 and A253535.

\begin{center}
\begin{tabular}{ |c c| } 
 \hline
 12 & 4 \\ 
  \hline
 30 & 14 \\ 
  \hline
 40 & 10 \\ 
  \hline
  44 & 20 \\
   \hline
   56 & 8 \\
    \hline
    84 & 15 \\
     \hline
 96 & 26 \\
  \hline
  117 & 60 \\
   \hline
   120 & 2 \\
    \hline
  135 & 42 \\
   \hline
 140 & 14 \\
  \hline
 182 & 66 \\
  \hline
  184 & 88 \\
   \hline
   190 & 102 \\
    \hline
    198 & 45 \\
     \hline
 224 & 10 \\
  \hline
 234 & 4 \\
  \hline
  248 & 174 \\
   \hline
   252 & 153 \\
    \hline
      260 & 164 \\
   
 \hline
\end{tabular}
\end{center}

\section{Questions and Conjectures}

This section considers some questions and conjectures that we have encountered in the course of writing this paper.

\textit{Question 1} Are there infinitely many coprime feebly amicable pairs?

Given the regularity of coprime pairs of feebly amicable pairs, we conjecture that there are an infinite number.

In \cite{erdos}, Paul Erdős proved that the asymptotic density of amicable integers relative to the positive integers was $0$. That is, the ratio of the number of amicable numbers less than $n$ with $n$ tends to zero as $n$ tends to infinity. This gives rise to the question:

\textit{Question 2} What is the density of feebly amicable numbers relative to the positive integers?

Given the number of feebly amicable numbers in the first $5,000$ integers is $310$, then $178$ in the next $5,000$, and $136$ in the third $5,000$, it appears that the density is decreasing and so the asymptotic density is at least less than $0.0272=\frac{136}{15,000}$. Indeed, \cite{kozek2015harmonious} gives an upper bound and confirms that the asymptotic density is $0$.

The sum of amicable numbers conjecture \cite{guy2004unsolved} states that as the largest number in an amicable pair approaches infinity, the percentage of the sums of the amicable pairs divisible by ten approaches $100\%$. We therefore ask the question:

\textit{Question 3} As the larger number in a feebly amicable pair approaches infinity, does the percentage of the sums of the pairs divisible by ten approach $100\%$?

Our data tells us that in the first $5000$ numbers there are 11 feebly amicable pairs that sum to a multiple of 10. There are then 8 in the next $5000$ and 4 between $10000$ and $15000$. As a sequence of fractions of the number of feebly amicable pairs this is: $0.035, 0.045, 0.029$. This does not give us any noticeable trend and once again much higher values would be required to make a strong conjecture other than to say it does not appear that the percentage tends towards 100$\%$ as in the amicable case. Computing further is certainly possible, but it becomes much more computationally expensive to compute sum of divisors and hence abundancy indices. \cite{sladkey2012successive} provides an algorithm to compute in $O(n^{\frac{1}{3}})$ time, but our code was much more crude.

\bibliographystyle{unsrt}  
\bibliography{references} 

\end{document}